\theoremstyle{definition}
\newtheorem{theorem}{Theorem}[section] %% numbering on section level
\newtheorem{remark}{Remark}[section] %% dito
\newtheorem{lemma}{Lemma}[section]
\numberwithin{equation}{section}
\newcommand{\length}[1]{\left\lvert #1\right\rvert}
\newcommand{\abs}[1]{\bigl\lvert #1\bigr\rvert}
\newcommand{\inv}[1]{#1^{-1}}
\newcommand{\set}[1]{\{#1\}}
\newcommand{\nn}{\mathbb{N}}
\newcommand{\goesto}{\rightarrow} % sequences 
\newcommand{\struct}[1]{\mathcal{#1}} % struktur: shorter
\DeclareMathOperator{\Exp}{E} % expected value --> stochastic
\DeclareMathOperator{\Var}{Var} % variance --> stochastic
\title{%A Generalization of Stirling's Approximation to the Central
  %Binomial Coefficient
  %Asymptotic normality of the distribution of multinomial triangle
  %coefficients and a 
  %Integer compositions and the sum of discrete random variables 
  Asymptotic normality of integer compositions inside a rectangle
} 
\author{Steffen Eger\\ \emph{Carnegie Mellon University}, \emph{School of Computer
  Science}\\ {\small \texttt{seger@cs.cmu.edu}}}
\date{}
\begin{document}
\maketitle

\begin{abstract}
%In this note, 
%We present a simple formula, generalizing the discrete
%convolution formula, for determining the distribution of the
%sum of 
%discrete integer-valued random variables based on restricted integer
%compositions. % and show how our formula can be applied 
Among all restricted integer compositions with at most $m$ parts,
each of which has size at most $l$, choose one uniformly at
random. Which integer does this composition represent? In the current
note, we show that underlying distribution is, for large $m$ and $l$,
approximately normal with mean value $\frac{ml}{2}$. 
\end{abstract}

\section{Introduction}
An integer composition of a nonnegative integer $n$ is, informally, a
way of writing %a nonnegative 
%integer 
$n$ as a sum of nonnegative integers $\pi_1,\dotsc,\pi_k$, for
some $k\ge 0$. Let $h_{l,m}(n)$ denote the number of integer
compositions of the nonnegative integer $n$ with at most $m$ parts, 
each of which has size at most $l$ (`compositions inside a rectangle').
Recently, Sagan (2009) \cite{sagan} has shown that
the sequence 
\begin{align*}
  h_{l,m}:= \bigl(\,h_{l,m}(0),\dotsc,h_{l,m}(lm)\,\bigr),
\end{align*}
is unimodal. In Figure \ref{fig:hlm}, we plot this sequence for $l=2$,
$m=5$; $l=6$, $m=5$; and $l=6$, $m=20$. 
\begin{figure*}[!ht]
    \centering
    \includegraphics[scale=0.12]{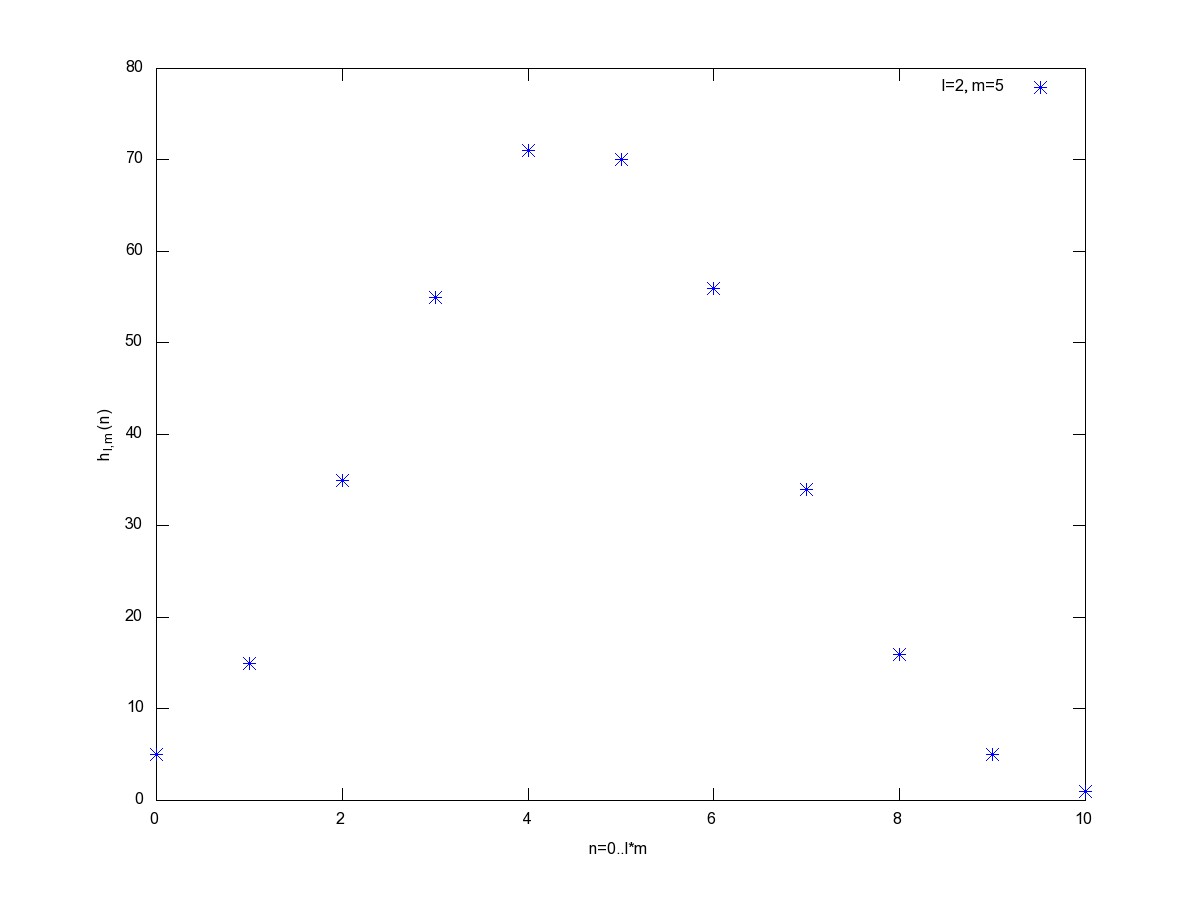}
    \includegraphics[scale=0.12]{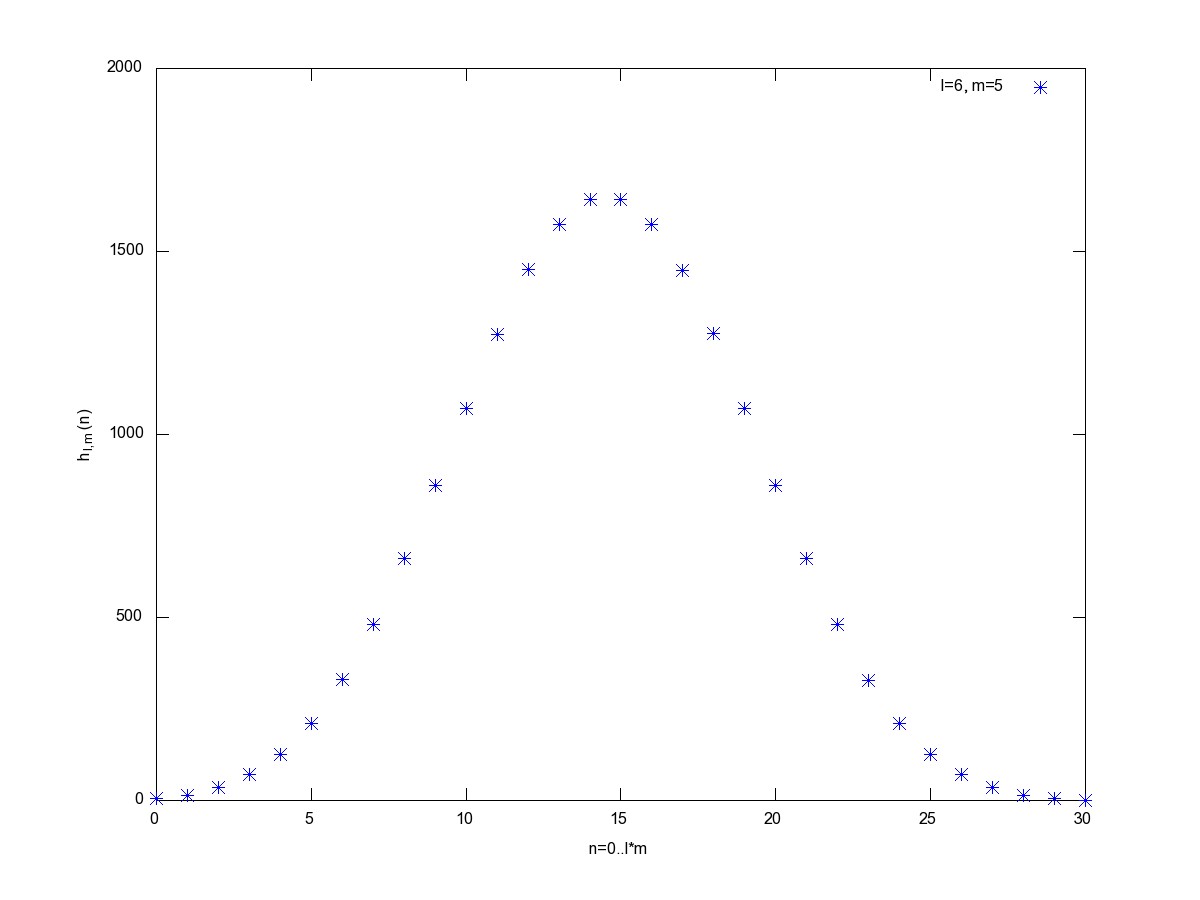}
    \includegraphics[scale=0.12]{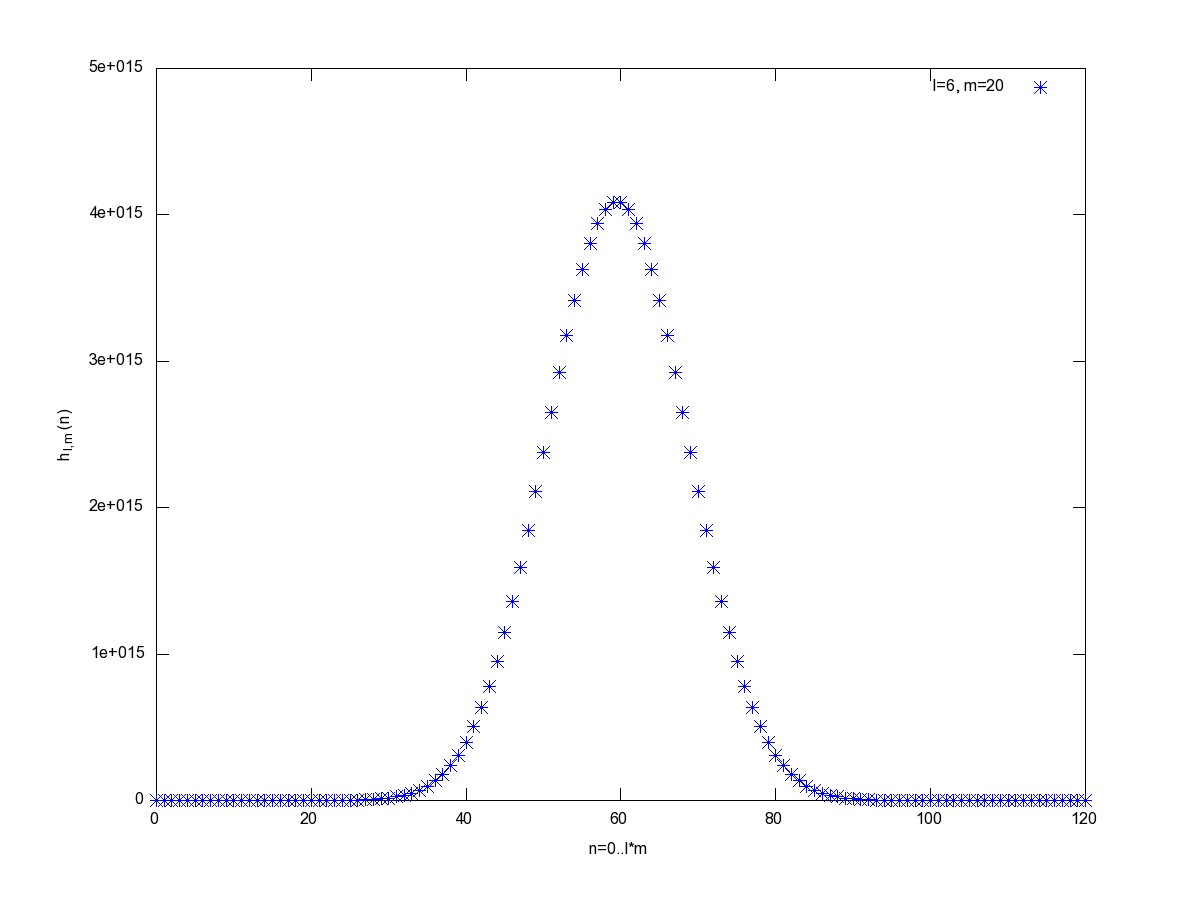}
    \caption
{ 
The sequences $h_{l,m}(0),\dotsc,h_{l,m}(lm)$ for $l=2$, $m=5$ (left),
$l=6$, $m=5$ (middle) and $l=6$, $m=20$ (right). 
}
\label{fig:hlm}
  \end{figure*}
Apparently, as $l$ and $m$ increase, $h_{l,m}$ looks more and more
`Gaussian'. This suggests a probabilistic interpretation of
$h_{l,m}(n)$, %taking $h_{l,m}(n)$ as a frequency plot, 
according to which
the normalized values
$\frac{h_{l,m}(n)}{\sum_{i=0}^{lm}h_{l,m}(i)}$, $n=0,\dotsc,lm$, denote the
probabilities that a uniform randomly chosen integer composition with
at most $m$ parts, each of which has size at most $l$, represents the
integer $n$. 
In the current note, we show that these probabilities follow,
%asymptotically, 
for large $l$ and $m$, 
approximately
a normal distribution with mean
value $\frac{lm}{2}$ and variance $m\frac{(l+1)^2-1}{12}$. 

Thereby, we first define \emph{multinomial triangles} as a
generalization of Pascal's triangle and characterize their entries,
\emph{polynomial coefficients}, as generalizations of the well-studied
binomial coefficients (Section \ref{sec:triangles}), whereupon we outline
a recently found relationship between polynomial coefficients and
specificially restricted integer compositions (Section
\ref{sec:integer}). 
%Restricted integer compositions
The latter, with various types of restrictions,
have attracted much attention in recent years (cf. \cite{chinn},
\cite{eger}, \cite{heubach},
\cite{hitczenko}, \cite{malandro}, \cite{schmutz},
\cite{shapcott}). For example, Malandro \cite{malandro} determines
asymptotic formulas 
for $L$-restricted integer compositions --- $L$ being an arbitrary
finite set --- and Shapcott \cite{shapcott} and Schmutz and Shapcott
\cite{schmutz} find a lognormal distribution for part products of
restricted integer compositions. 
Hitczenko and Stengle \cite{hitcz2} derive the expected number of
distinct part sizes of unrestricted random compositions. 
Restricted and unrestricted integer
compositions have a variety of applications, ranging from the theory of
patterns \cite{heubach2} to monotone paths in two-dimensional lattices
(\cite{kimberling}), alignments between strings (\cite{eger4}), and
the distribution of the sum of discrete integer-valued random
variables (\cite{eger2}). 

Then, in Section \ref{sec:main}, we state our main theorem, asymptotic
normality of compositions inside a rectangle, which we prove in
Section \ref{sec:proof}. In the conclusion, we discuss
generalizations of the analyzed setting where part sizes are
restricted to lie within 
arbitrary finite sets. 
% partitions: erdos?
%Thereafter, we
%present our main theorem, which is based on integer compositions, in
%Section \ref{sec:main}, and show, in 
%Section \ref{sec:application}, how it can be applied to
%straightforwardly determine the distribution of the sum of various
%particularly defined discrete random variables, i.a. using the results
%about the relationship between integer compositions and polynomial
%coefficients discussed in Section \ref{sec:integer}. In Section
%\ref{sec:conclusion}, we conclude.

While our main result, perceived rightly, might be considered not very
surprising, the steps that lead to it (Lemmas \ref{lemma:exact} to
\ref{lemma:approx}) may be judged interesting on their own (and are
certainly novel) because they specify the exact distribution of the
random variable $X_{l,m}$ that sums the parts of a randomly chosen
integer composition from a rectangle of size $l\times m$, and give an
elegant characterization of it in terms of the distribution of the sum
of independent uniform random variables and an ``error term'' that
quadratically tends toward zero.

\section{Multinomial triangles and polynomial coefficients}\label{sec:triangles}
In generalization to binomial triangles, $(l+1)$-nomial
triangles, $l\ge 0$, are defined in the following way. Starting with a
$1$ in row zero, construct an entry in row $k$, $k\ge 1$, by adding the
overlying $(l+1)$ entries in row $(k-1)$ (some of these entries are
taken as zero if not defined); thereby, row $k$ has
$(kl+1)$ entries. For example, the monomial ($l=0$), binomial ($l=1$),
trinomial ($l=2$) and
    quadrinomial triangles ($l=3$) start as follows, 
    \begin{table}[!h]
    \begin{tabular}{r}
      1\\ 1\\ 1\\ 1
    \end{tabular}\hspace{0.5cm}
    \begin{tabular}{rrrr}
      1\\
      1 & 1\\
      1 & 2 & 1\\
      1 & 3 & 3 & 1\\
    \end{tabular}\hspace{0.5cm}
    \begin{tabular}{rrrrrrr}
      1\\
      1 & 1 & 1\\
      1 & 2 & 3 & 2 & 1\\
      1 & 3 & 6 & 7 & 6 & 3 & 1\\
    \end{tabular}\hspace{0.5cm}
    \begin{tabular}{rrrrrrrrrr}
      1\\
      1 & 1 & 1 & 1\\
      1 & 2 & 3 & 4 & 3 & 2 & 1\\
      1 & 3 & 6 & 10 & 12 & 12 & 10 & 6 & 3 & 1\\
    \end{tabular}
    \end{table}

In the $(l+1)$-nomial triangle, entry $n$, $0\le n\le kl$, in row $k$, which we denote
by $\binom{k}{n}_{l+1}$ and refer to as \emph{polynomial coefficient}
(cf. Caiado (2007) \cite{caiado}, Comtet (1974) \cite{comtet}),
has the following interpretation. It 
is the coefficient of $x^n$ in the expansion of
\begin{align}\label{eq:multinomial_coeff1}
  (1+x+x^2+\dotsc+x^l)^k = \sum_{n=0}^{kl} \binom{k}{n}_{l+1}x^n.
\end{align}
Also note that, by its definition, $\binom{k}{n}_{l+1}$ satisfies the
following recursion
\begin{align}
  \binom{k}{n}_{l+1} = \sum_{j=0}^l \binom{k-1}{n-j}_{l+1}.
\end{align}

\section{Integer compositions and polynomial coefficients}\label{sec:integer}
  An {\bf integer composition} of a nonnegative integer $n$ is a tuple
  $\pi=(\pi_1,\dotsc,\pi_k)$, $k\ge 0$, of nonnegative integers such that 
  \begin{align*}
    n = \pi_1+\dotsc+\pi_k
  \end{align*}
  where the $\pi_i$'s are called \emph{parts}, and $k$ is the
  \emph{number of parts}.\footnote{Compositions where some parts are
    allowed to be zero are sometimes called \emph{weak compositions}.}
  Let $\struct{C}(n,k,a,b)$
  %Let $c(n,k,a,b)$ 
  denote the set of %number of 
  restricted compositions of $n$ into $k$ parts $\pi_i$ with $a\le
  \pi_i\le b$, where 
  $a,b\in\nn\cup\set{\infty}$ such that $0\le a\le b$, and let
  $c(n,k,a,b)$ denote its size,
  $c(n,k,a,b)=\length{\struct{C}(n,k,a,b)}$.  
  For example, for $n=5$, $k=2$, $a=0$, $b=\infty$, we have
  \begin{align*}
    5 = 5+0=0+5=4+1=1+4=3+2=2+3,
  \end{align*}
  and thus $c(5,2,0,\infty)=6$. 

  The
  following results are well-known. 
  \begin{align}
    c(n,k,0,\infty) &= \binom{n+k-1}{k-1}\label{eq:1}\\
    c(n,k,1,\infty) &= \binom{n-1}{k-1}\label{eq:2}\\
    c(n,k,a,\infty) &= c(n-ka,k,0,\infty) = \binom{n-ka+k-1}{k-1}\label{eq:3}.
  \end{align}

  Moreover, in recent work, Eger (2012) \cite{eger} has shown,
  more generally, a simple relationship between the number of
  restricted integer 
  compositions and polynomial coefficients, namely, 
  \begin{align}\label{eq:eger}
    c(n,k,a,b) &= \binom{k}{n-ka}_{b-a+1}.
  \end{align}

\section{Main theorem}\label{sec:main}
Let $m$ be a positive integer and let $l$ be a nonnegative integer. 
Denote by $h_{l,m}(n)$ the number of integer compositions of the
integer $n$ with at most $m$ parts $p$, each of which has size at most
$l$, i.e. $0\le p\le l$. 
%Sagan () has shown that
%\begin{align*}
%  h_{l,m}(0),h_{l,m}(1),\dotsc,h_{l,m}(lm)
%\end{align*}
%is a unimodal sequence. In Figure \ref{}, we plot this sequence for
%$l=$ and $m=$. We may interpret this figure as a frequency plot.
%This gives rise to the following idea. 
Let $X_{l,m}$ be the
random variable that takes on the integer $n$, for $0\le n\le lm$,
with probability
\begin{align*}
  \frac{h_{l,m}(n)}{\sum_{i=0}^{lm} h_{l,m}(i)}.
\end{align*}
%where $h_{l,m}=\sum_{i=0}^{lm} h_{l,m}(i)$. 
%Note that by $X$ we
%measure the probability for a random integer partition
%$\pi=(\pi_1,\dotsc)$ with at most
%$m$ parts of size at most $l$ each to represent the integer $n$,
%i.e. $n=\sum_i \pi_i$. 
%What is the (limiting) distribution
%of $X$? 
\begin{theorem}\label{theorem:main}
  Let $\mu_{l,m}=\frac{ml}{2}$ and let $\sigma_{l,m}^2 =
  \frac{(l+1)^2-1}{12}$. Then
  \begin{align*}
    %{\sqrt{m}}\Bigl(\frac{X_{l,m}}{m}-\frac{l}{2}\Bigr)\goesto
    %\mathcal{N}(0,\frac{(l+1)^2-1}{12}) \quad\text{as }
    %l,m\goesto\infty.
    \frac{X_{l,m}-m\mu_{l,m}}{\sigma_{l,m} \sqrt{m}}\goesto
      \struct{N}(0,1) \quad\text{as } l,m\goesto\infty.
\end{align*}
\end{theorem}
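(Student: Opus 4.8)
The plan is to reduce the theorem to a central limit theorem for a sum of independent, identically distributed random variables, and then to handle the fact that $l$ (and not only $m$) tends to infinity by passing to a triangular-array version of the CLT.

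First I would pin down the distribution of $X_{l,m}$ exactly. By \eqref{eq:eger} with $a=0$, $b=l$, $k=m$, together with \eqref{eq:multinomial_coeff1}, the counting function satisfies $h_{l,m}(n)=\binom{m}{n}_{l+1}=[x^n]\,(1+x+\cdots+x^l)^m$. Setting $x=1$ in \eqref{eq:multinomial_coeff1} identifies the normalizing constant as $\sum_{i=0}^{lm}h_{l,m}(i)=(l+1)^m$. Hence
\[
  \Pr[X_{l,m}=n]=\frac{[x^n]\,(1+x+\cdots+x^l)^m}{(l+1)^m}=[x^n]\left(\frac{1+x+\cdots+x^l}{l+1}\right)^m .
\]
The right-hand side is exactly the coefficient of $x^n$ in the $m$-th power of the probability generating function of a single random variable $U$ that is uniform on $\set{0,1,\dots,l}$. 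Therefore $X_{l,m}$ has the same law as $S_m:=U_1+\cdots+U_m$ with $U_1,\dots,U_m$ i.i.d. copies of $U$. Since $\Exp[U]=\tfrac{l}{2}$ and $\Var(U)=\tfrac{(l+1)^2-1}{12}=\sigma_{l,m}^2$, the quantity in the theorem is precisely the standardized sum $\bigl(S_m-\tfrac{ml}{2}\bigr)/\bigl(\sigma_{l,m}\sqrt m\bigr)=\bigl(S_m-\Exp[S_m]\bigr)/\sqrt{\Var(S_m)}$, where $\Var(S_m)=m\sigma_{l,m}^2$.

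Next I would establish asymptotic normality of this standardized sum. If $l$ were fixed this would be the Lindeberg–Lévy CLT, but the genuine issue is that $l\to\infty$ as well, so the common law of the summands changes with $m$ through $l=l(m)$ and we face a triangular array $(U_i)_{1\le i\le m}$. I would therefore verify the Lindeberg condition for the centered, row-wise independent array $\tilde U_i:=U_i-\tfrac{l}{2}$, whose row variance is $s_m^2:=\Var(S_m)=m\sigma_{l,m}^2$. The decisive observation is that each summand is uniformly bounded, $\abs{\tilde U_i}\le\tfrac{l}{2}$, whereas $s_m=\sqrt m\,\sigma_{l,m}\ge\sqrt m\,\tfrac{l}{\sqrt{12}}$, so that $\max_i\abs{\tilde U_i}/s_m\le\sqrt{3/m}\goesto 0$. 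Consequently, for every fixed $\varepsilon>0$ and all $m>3/\varepsilon^2$ one has $\varepsilon s_m>\tfrac{l}{2}\ge\abs{\tilde U_i}$, so each truncated second moment $\Exp\bigl[\tilde U_i^2\,\one_{\set{\abs{\tilde U_i}>\varepsilon s_m}}\bigr]$ vanishes and the Lindeberg sum is identically zero for large $m$. The Lindeberg–Feller theorem then delivers $\bigl(S_m-\Exp[S_m]\bigr)/s_m\goesto\struct{N}(0,1)$, which combined with the distributional identity above is the assertion. (Note that this argument in fact only uses $m\goesto\infty$: the spread of the sum is of order $\sqrt m\,l$, against which the individual parts of size $O(l)$ are automatically negligible.)

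The main obstacle is conceptual rather than computational: one must resist applying the classical fixed-law CLT and instead set up the triangular-array framework, after which the Lindeberg verification collapses precisely because the parts are bounded by $l/2=o(s_m)$. A more laborious alternative, avoiding Lindeberg–Feller, is to argue directly with characteristic functions, writing the standardized characteristic function as the $m$-th power of the characteristic function of $(U-\tfrac{l}{2})/\sigma_{l,m}$ and showing convergence to $e^{-t^2/2}$ uniformly on compacta; this route additionally yields the quantitative, "error-term" comparison of the law of $X_{l,m}$ with the law of the sum of independent uniforms that underlies Lemmas \ref{lemma:exact}–\ref{lemma:approx}.
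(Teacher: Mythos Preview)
Your identification $h_{l,m}(n)=\binom{m}{n}_{l+1}$ is incorrect, and this undermines the central step of your argument. By definition $h_{l,m}(n)$ counts compositions of $n$ with \emph{at most} $m$ parts in $\{0,\dots,l\}$, not with exactly $m$ parts; since zero parts are allowed, a tuple such as $(1)$ is counted separately from $(1,0)$ and from $(0,1)$, so padding by zeros does not give a bijection. Hence $h_{l,m}(n)=\sum_{j=1}^{m}c(n,j,0,l)=\sum_{j=1}^{m}\binom{j}{n}_{l+1}$ (this is precisely Lemma~\ref{lemma:exact}), and the normalizing constant is $\sum_{j=1}^{m}(l+1)^{j}=\tfrac{l+1}{l}\bigl((l+1)^{m}-1\bigr)$, not $(l+1)^{m}$. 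Consequently $X_{l,m}$ does \emph{not} have the same law as $S_{l}^{(m)}=U_{1}+\dots+U_{m}$; the two distributions differ by the contribution of the rows $\binom{j}{n}_{l+1}$ with $j<m$, and bounding that discrepancy is exactly the content of Lemmas~\ref{lemma:central}--\ref{lemma:approx} in the paper. Without that comparison step your reduction to a sum of i.i.d.\ uniforms is not justified.

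That said, your Lindeberg--Feller verification is correct and actually supplies something the paper handles rather casually: after Lemma~\ref{lemma:iid} the paper simply invokes ``the Central Limit Theorem'' for $S_{l}^{(m)}$, whereas with $l\to\infty$ one is genuinely in a triangular-array situation. Your observation that $\lvert U_{i}-l/2\rvert\le l/2=o(\sqrt{m}\,\sigma_{l,m})$, so that the truncated second moments in the Lindeberg sum vanish for all large $m$, is exactly what is needed there. The correct overall structure is therefore: retain the paper's Lemmas~\ref{lemma:exact}--\ref{lemma:approx} to show $P[X_{l,m}=n]=\gamma_{l,m}P[S_{l}^{(m)}=n]+e_{l,m}$ with $\gamma_{l,m}\to 1$ and $e_{l,m}=O(l^{-2})$, and then plug in your triangular-array CLT for $S_{l}^{(m)}$ to conclude.
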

Our strategy for proving Theorem \ref{theorem:main} is as
follows. First, we determine the exact distribution of $X_{l,m}$ in
Lemma \ref{lemma:exact}. Then we derive the exact distribution of the
sum of $m$ independently and uniformly distributed random variables in
Lemma \ref{lemma:iid}, which is, by the Central Limit Theorem,
asymptotically a normal distribution. Next, Lemmas \ref{lemma:central}
and \ref{lemma:asymptotic} provide inequalities and upper bounds that
we require in Lemma \ref{lemma:approx}, where we show that the distribution
of $X_{l,m}$ can be represented, roughly, as the sum of two parts: the
distribution of the sum $S_1+\dotsc+S_m$ of $m$ independently
distributed uniform 
random variables (derived in Lemma \ref{lemma:iid}) and an ``error term''
that converges quadratically toward zero in $l$. 
%Thus, as $m$ becomes
%large $S_1+\dotsc+S_m$ converges in distribution to a normal
%distribution and as $l$ becomes larger, the error 

\section{Proof of the main theorem}\label{sec:proof}
\begin{lemma}\label{lemma:exact}
  Let $i$, $1\le i\le m$, be the smallest index such that $n\le
  il$. Then,
  \begin{align*}
    P[X_{l,m}=n] =
    \frac{1}{(l+1)^m-1}\frac{l}{l+1}\sum_{j=i}^m\binom{j}{n}_{l+1}.
  \end{align*}
\end{lemma}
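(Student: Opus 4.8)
The plan is to unpack the definition of $h_{l,m}(n)$, reduce the count of rectangle‑restricted compositions to polynomial coefficients via \eqref{eq:eger}, evaluate the normalizing constant in closed form, and then truncate the resulting sum by exploiting the degree of the underlying generating polynomial.

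First I would write $h_{l,m}(n)$ as a sum over the number of parts. A composition counted by $h_{l,m}(n)$ has some number of parts $k$ with $1\le k\le m$, each part lying in $[0,l]$; since parts of size $0$ are permitted (weak compositions), the compositions with exactly $k$ parts form precisely the set $\struct{C}(n,k,0,l)$, so $h_{l,m}(n)=\sum_{k=1}^{m}c(n,k,0,l)$. Applying \eqref{eq:eger} with $a=0$ and $b=l$ gives $c(n,k,0,l)=\binom{k}{n}_{l+1}$, hence $h_{l,m}(n)=\sum_{k=1}^{m}\binom{k}{n}_{l+1}$.

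Next I would evaluate the normalizing constant $Z:=\sum_{n=0}^{lm}h_{l,m}(n)$. Interchanging the two finite sums and using that, for each fixed $k\le m$, the coefficient $\binom{k}{n}_{l+1}$ vanishes for $n>kl$, the inner sum $\sum_{n=0}^{lm}\binom{k}{n}_{l+1}$ equals $\sum_{n=0}^{kl}\binom{k}{n}_{l+1}$, which is just the value at $x=1$ of the generating polynomial in \eqref{eq:multinomial_coeff1}, namely $(l+1)^{k}$. Summing the geometric series $\sum_{k=1}^{m}(l+1)^{k}$ then yields $Z=\frac{l+1}{l}\bigl((l+1)^{m}-1\bigr)$, so that $1/Z=\frac{1}{(l+1)^{m}-1}\cdot\frac{l}{l+1}$, which is exactly the prefactor appearing in the claim.

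Finally, forming $P[X_{l,m}=n]=h_{l,m}(n)/Z$ and substituting both expressions, it remains only to replace the lower summation limit $k=1$ by $k=i$. Here I would invoke the definition of $i$: since $\binom{k}{n}_{l+1}$ is the coefficient of $x^{n}$ in a polynomial of degree $kl$, it vanishes whenever $n>kl$; as $i$ is the smallest index with $n\le il$, every term with $1\le k<i$ satisfies $n>kl$ and hence contributes $0$ (note also that $i\le m$ always, since $k=m$ gives $n\le lm$). Dropping these zero terms turns $\sum_{k=1}^{m}$ into $\sum_{j=i}^{m}$ and completes the proof. I do not anticipate a serious obstacle; the only point demanding care is fixing the convention that ``at most $m$ parts'' means summing over $k=1,\dotsc,m$, which is precisely the reading under which the normalizing constant reproduces the stated prefactor.
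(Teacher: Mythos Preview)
Your proposal is correct and follows essentially the same route as the paper's own proof: express $h_{l,m}(n)$ as $\sum_{k=1}^{m}\binom{k}{n}_{l+1}$ via \eqref{eq:eger}, compute the normalizer as the geometric sum $\sum_{k=1}^{m}(l+1)^{k}$, and then drop the terms with $k<i$ because $\binom{k}{n}_{l+1}=0$ when $n>kl$. Your write-up is in fact a bit more explicit than the paper's (you justify $\sum_{n}\binom{k}{n}_{l+1}=(l+1)^{k}$ by evaluating \eqref{eq:multinomial_coeff1} at $x=1$ and you spell out the truncation via the degree of the generating polynomial), but there is no genuine difference in method.
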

\begin{proof}
  By definition, $h_{l,m}(n)=\sum_{j=1}^m
  c(n,j,0,l)=\sum_{j=1}^m\binom{j}{n}_{l+1}$, where the last equality
  follows from \eqref{eq:eger}. Moreover, $c(n,j,0,l)$ is obviously
  zero when $j<i$ since $n>(i-1)l$. Finally, the number of integers
  %compositions with 
  representable by $j$ parts, each between $0$ and $l$, is obviously
  $(l+1)^j$. Therefore,
  \begin{align*}
    \sum_{i=0}^{lm}h_{l,m}(i) =\sum_{i=0}^{lm} \sum_{j=1}^m
  c(i,j,0,l) = \sum_{j=1}^m\sum_{i=0}^{lm}c(i,j,0,l) =
  \sum_{j=1}^m(l+1)^j = \frac{l+1}{l}{\bigl((l+1)^m-1\bigr)}.
  \end{align*}
  Hence,
  \begin{align*}
    P[X_{l,m}=n]=\frac{h_{l,m}(n)}{\sum_{i=0}^{lm} h_{l,m}(i)}=
    \frac{1}{(l+1)^m-1}\frac{l}{l+1}\sum_{j=i}^m\binom{j}{n}_{l+1}. 
  \end{align*}
\end{proof}
%It is most convenient to think of $X_{l,m}$ as a compound
%distribution. First, choose the number of parts $N$, $1\le N\le m$,
%of the integer composition, 
%then choose $N$ variables $S_1,\dotsc,S_N$ whose distribution is
%uniform on $\set{0,\dotsc,l}$. Therefore,
%\begin{align}\label{eq:compound}
%  P[X=n] = \sum_{k=1}^m P[N=k]P[S_1+\dotsc+S_k=n]
%\end{align}

%Let $\struct{H}_{l,m}$ be the set of all integer compositions 
%%that
%%have 
%with at most $m$ parts $\pi_j$, each with size at most $l$, i.e. %$0\le
%%p\le l$. 
%\begin{align*}
%  \struct{H}_{l,m} = \set{\pi = (\pi_1,\dotsc,\pi_i)\sd 1\le i\le m,
%    0\le \pi_j\le l, j=1,\dotsc,i}.
%\end{align*}
%
%From $\struct{H}_{l,m}$ uniform randomly select an integer
%composition $\pi=(\pi_1,\dotsc)$ and set $\Pi=\sum_j
%\pi_j$. Which integer is $\Pi$? What is the distribution of
%$\Pi$?
%
%We first determine the size of $\struct{H}_{l,m}$. For fixed $i$ there
%are $(l+1)^i$ restricted integer compositions with $i$ parts, so
%\begin{align*}
%  \abs{\struct{H}_{l,m}} = \sum_{i=1}^m (l+1)^i = \frac{(l+1)\bigl((l+1)^m-1\bigr)}{l}
%\end{align*}
\begin{lemma}\label{lemma:iid}
  Denote by $S^{(m)}_l$ the sum $S_1+\dotsc+S_m$ of independent
  uniform random variables $S_j$, $j=1,\dotsc,m$, 
  each taking values from the set $\set{0,\dotsc,l}$. The distribution
  of $S^{(m)}_l$ is given by
  \begin{align*}
    P[S^{(m)}_l=n] = \Bigl(\frac{1}{l+1}\Bigr)^m\binom{m}{n}_{l+1}.
  \end{align*}
\end{lemma}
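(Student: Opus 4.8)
The plan is to compute the probability generating function (or equivalently the distribution via convolution) of the sum $S^{(m)}_l = S_1 + \dotsc + S_m$ directly, and then identify the coefficients with polynomial coefficients using the defining identity~\eqref{eq:multinomial_coeff1}. First I would note that each $S_j$ is uniform on $\set{0,\dotsc,l}$, so its probability generating function is
\begin{align*}
  \Exp\bigl[x^{S_j}\bigr] = \sum_{s=0}^{l}\frac{1}{l+1}x^s = \frac{1}{l+1}\bigl(1+x+x^2+\dotsc+x^l\bigr).
\end{align*}

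Since the $S_j$ are independent, the generating function of the sum factorizes as a product, giving
\begin{align*}
  \Exp\bigl[x^{S^{(m)}_l}\bigr] = \prod_{j=1}^{m}\Exp\bigl[x^{S_j}\bigr] = \Bigl(\frac{1}{l+1}\Bigr)^m\bigl(1+x+x^2+\dotsc+x^l\bigr)^m.
\end{align*}
The key step is then to read off $P[S^{(m)}_l = n]$ as the coefficient of $x^n$ in this expression. By identity~\eqref{eq:multinomial_coeff1} with $k=m$, the coefficient of $x^n$ in $(1+x+\dotsc+x^l)^m$ is exactly $\binom{m}{n}_{l+1}$, so the coefficient of $x^n$ in the generating function of $S^{(m)}_l$ is $\bigl(\tfrac{1}{l+1}\bigr)^m\binom{m}{n}_{l+1}$, which is the claimed formula.

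An equivalent route, if one prefers to avoid generating functions, is a direct combinatorial count: $P[S^{(m)}_l = n]$ equals the number of tuples $(s_1,\dotsc,s_m)$ with each $s_j\in\set{0,\dotsc,l}$ and $s_1+\dotsc+s_m = n$, divided by the total number of tuples $(l+1)^m$. The number of such tuples is precisely $c(n,m,0,l)$ in the notation of Section~\ref{sec:integer}, and by the relationship~\eqref{eq:eger} we have $c(n,m,0,l) = \binom{m}{n}_{l+1}$, again yielding the stated distribution.

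I do not expect any real obstacle here: the result is essentially a restatement of~\eqref{eq:multinomial_coeff1} in probabilistic language, and the only thing to be careful about is bookkeeping the uniform normalization $\bigl(\tfrac{1}{l+1}\bigr)^m$ and confirming that the support $0\le n\le lm$ matches the range of nonzero polynomial coefficients in row $m$. The generating-function argument is the cleanest and most self-contained, so that is the one I would write up.
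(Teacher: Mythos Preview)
Your proof is correct and complete. The paper itself does not give a proof of this lemma at all; it simply cites Caiado \cite{caiado} and Eger \cite{eger2}. Your generating-function argument is the standard one and is essentially what those references contain, and your alternative combinatorial count via $c(n,m,0,l)$ and \eqref{eq:eger} is equally valid and nicely ties the lemma back to the paper's own Section~\ref{sec:integer}. There is nothing to compare against in the paper, so either route you propose would serve as a self-contained replacement for the citation.
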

\begin{proof}
  See Caiado \cite{caiado}, Eger \cite{eger2}. 
\end{proof}

\begin{remark}
  Note that the expected value and the variance of $S_l^{(m)}$ in
  Lemma \ref{lemma:iid} are given by
  \begin{align*}
    \Exp[S^{(m)}_l] = m\Exp[S_j] =
    \frac{ml}{2},\quad\quad\Var[S^{(m)}_l] = m\Var[S_j] = 
    m\frac{(l+1)^2-1}{12}.
  \end{align*}
  Also note that, by the Central Limit
  Theorem, the distribution of $S_l^{(m)}$ is asymptotically normal. 
\end{remark}

Now, we prove a fact well-known for binomial coefficients, namely, that
the `central' coefficient majorizes the remaining coefficients
in a given row in the (multinomial) triangle. 
\begin{lemma}\label{lemma:central}
  Let $k\ge 0$ and $l\ge 0$ be integers. For all integers $n$ such
  that $0\le n\le kl$,
  \begin{align*}
    \binom{k}{n}_{l+1} \le \binom{k}{\lfloor\frac{kl}{2}\rfloor}_{l+1}. 
  \end{align*}
\end{lemma}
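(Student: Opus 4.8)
The plan is to show that the polynomial coefficients $\binom{k}{n}_{l+1}$, viewed as a sequence in $n$ for fixed $k$ and $l$, are \emph{unimodal and symmetric about the center} $\frac{kl}{2}$. Once both properties are in hand, the maximum of the row is attained at the central index $\lfloor\frac{kl}{2}\rfloor$, which is exactly the claim. The symmetry is immediate from the defining generating function \eqref{eq:multinomial_coeff1}: substituting $x\mapsto 1/x$ in $(1+x+\dotsb+x^l)^k$ and multiplying through by $x^{kl}$ shows that the polynomial is palindromic, hence $\binom{k}{n}_{l+1}=\binom{k}{kl-n}_{l+1}$ for all $0\le n\le kl$. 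So the real work is unimodality, and combined with symmetry, unimodality forces the peak to sit at the central coefficient.

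To prove unimodality I would argue by induction on $k$, using the recursion $\binom{k}{n}_{l+1}=\sum_{j=0}^l\binom{k-1}{n-j}_{l+1}$. The base cases $k=0$ (a single $1$) and $k=1$ (the all-ones row of length $l+1$) are trivially unimodal and symmetric. For the inductive step, the key observation is that forming row $k$ from row $k-1$ is exactly convolving the row-$(k-1)$ sequence with the uniform window $(1,1,\dotsc,1)$ of length $l+1$. A standard and clean fact is that the convolution of two symmetric unimodal sequences (equivalently, two log-concave or two Pólya-frequency sequences) is again symmetric and unimodal; the uniform window is itself symmetric and unimodal, so the convolution of it with the symmetric unimodal row $k-1$ stays symmetric and unimodal. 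This preserves the inductive hypothesis and yields the result for row $k$.

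The main obstacle is pinning down the precise convolution lemma in a self-contained way, since "convolution preserves unimodality'' is true under the right hypotheses but false in full generality (two unimodal sequences need not convolve to a unimodal one without symmetry or log-concavity). The cleanest route I would take is to prove the stronger statement that each row is \emph{log-concave}, i.e. $\binom{k}{n}_{l+1}^2\ge\binom{k}{n-1}_{l+1}\binom{k}{n+1}_{l+1}$, because log-concavity of a nonnegative sequence with no internal zeros implies unimodality and is well behaved under the relevant operations. Alternatively, and perhaps more elementarily for this paper, I would avoid a general convolution theorem and instead prove directly by induction that the partial differences $\binom{k}{n}_{l+1}-\binom{k}{n-1}_{l+1}$ are nonnegative for $n\le\frac{kl}{2}$: writing this difference via the recursion telescopes into a sum of row-$(k-1)$ differences over a sliding window, and the inductive hypothesis on the sign pattern of those differences, together with symmetry, controls the sign. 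Either way, the crux is transferring the monotonicity (or log-concavity) of row $k-1$ through the length-$(l+1)$ sliding-window sum, and the symmetry established above is what lets us conclude that the common peak lands at $\lfloor\frac{kl}{2}\rfloor$.
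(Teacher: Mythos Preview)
Your proposal is correct, and among the three routes you sketch, the paper chooses precisely your third one: it shows symmetry, then proves by induction on $k$ that the first half of each row is nondecreasing by controlling the consecutive differences. The paper makes this concrete via the telescoped identity
\[
\binom{k}{n}_{l+1}-\binom{k}{n-1}_{l+1}=\binom{k-1}{n}_{l+1}-\binom{k-1}{n-l-1}_{l+1},
\]
so the ``sum of row-$(k-1)$ differences'' you anticipate actually collapses to a single difference of two row-$(k-1)$ entries; nonnegativity of that difference for $n\le\lfloor kl/2\rfloor$ then follows from the inductive unimodality and symmetry of row $k-1$ (together with the observation that $n\le(k-1)l$ in that range, so the term $\binom{k-1}{n}_{l+1}$ is genuinely in the support).

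Your first two routes are genuine alternatives the paper does not pursue. The convolution argument (row $k$ is row $k-1$ convolved with the all-ones window, and products of symmetric unimodal polynomials with nonnegative coefficients remain symmetric unimodal) is the cleanest conceptually and immediately generalizes to other symmetric unimodal windows. The log-concavity route is stronger than needed but has the advantage that log-concavity is stable under convolution without any symmetry hypothesis, and the all-ones window is (weakly) log-concave, so the induction goes through with no side conditions. The paper's elementary difference computation, by contrast, is entirely self-contained and requires no external lemma about convolutions or PF sequences, at the cost of a small amount of case-checking to verify the sign of $\binom{k-1}{n}_{l+1}-\binom{k-1}{n-l-1}_{l+1}$.
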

\begin{proof}
  By the representation of $\binom{k}{n}_{l+1}$ as
  $\binom{k}{n}_{l+1}=\sum_{j=0}^{l}\binom{k-1}{n-j}_{l+1}$ we find for
  $n\ge 1$
  \begin{align}\label{eq:repr}
    \binom{k}{n}_{l+1} =
    \binom{k}{n-1}_{l+1}+\Bigl[\binom{k-1}{n}_{l+1}-\binom{k-1}{n-l-1}_{l+1}\Bigr]. 
  \end{align}
  Moreover, it is easy to show that polynomial coefficients are
  symmetric in the following sense,
  \begin{align*}
    \binom{k}{n}_{l+1} = \binom{k}{kl-n}_{l+1}. 
  \end{align*}
  Therefore it suffices to show that the sequence
  $\binom{k}{0}_{l+1},\binom{k}{1}_{l+1},\dotsc,\binom{k}{\lfloor\frac{kl}{2}\rfloor}_{l+1}$ 
  is non-decreasing. But by \eqref{eq:repr} this easily follows
  inductively, using the row number $k$ as induction
  variable. Importantly, note that, in \eqref{eq:repr}, if $n\le
  \lfloor\frac{kl}{2}\rfloor$, then $\binom{k-1}{n}_{l+1}$ is defined
  and greater 
  than zero for all $k\ge 2$ since then $n\le
  \lfloor\frac{kl}{2}\rfloor\le (k-1)l$.  
\end{proof}

In the following lemma, we write $a_k\sim b_k$
as a short-hand for $\lim_{k\goesto\infty} \frac{a_k}{b_k}=1$. Also
note that the following lemma is a generalization of Stirling's
approximation to the central binomial coefficient. 
\begin{lemma}\label{lemma:asymptotic}
  For all fixed $l$,
  \begin{align*}
  \binom{k}{\lfloor\frac{kl}{2}\rfloor}_{l+1}\sim
  \frac{(l+1)^{k}}{\sqrt{2\pi 
      k\frac{(l+1)^2-1}{12}}}.
  \end{align*}
\end{lemma}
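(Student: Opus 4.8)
The plan is to translate the claim into a \emph{local} central limit statement for the sum $S_l^{(k)}$ of Lemma \ref{lemma:iid}. Since that lemma gives $P[S_l^{(k)}=n]=(l+1)^{-k}\binom{k}{n}_{l+1}$, I may rewrite the quantity to be estimated as
\begin{align*}
  \binom{k}{\lfloor\tfrac{kl}{2}\rfloor}_{l+1}=(l+1)^k\,P\!\left[S_l^{(k)}=\left\lfloor\tfrac{kl}{2}\right\rfloor\right].
\end{align*}
The index $\lfloor kl/2\rfloor$ lies within distance $1$ of the mean $\Exp[S_l^{(k)}]=kl/2$, so the assertion is precisely that, at its central lattice point, the point mass of $S_l^{(k)}$ behaves like the height $\frac{1}{\sqrt{2\pi k\sigma_l^2}}$ of the limiting Gaussian density, with $\sigma_l^2=\frac{(l+1)^2-1}{12}=\Var[S_j]$. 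Throughout I assume $l\ge1$; for $l=0$ the coefficient is $1$ and the right-hand side is undefined, so that case is excluded.

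To obtain this I would use Fourier inversion on the integer lattice. Writing $\phi(\theta)=\Exp[e^{i\theta S_j}]=\frac{1}{l+1}\sum_{t=0}^{l}e^{i t\theta}$ for the common characteristic function of the summands, inversion yields
\begin{align*}
  P\!\left[S_l^{(k)}=n\right]=\frac{1}{2\pi}\int_{-\pi}^{\pi}\phi(\theta)^k e^{-i n\theta}\,d\theta .
\end{align*}
Two facts drive the asymptotics. First, near $\theta=0$ one has the Taylor expansion $\log\phi(\theta)=i\tfrac{l}{2}\theta-\tfrac{1}{2}\sigma_l^2\theta^2+O(\theta^3)$, whose coefficients are exactly the mean and variance of $S_j$ recorded in the remark after Lemma \ref{lemma:iid}. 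Second, because the support $\set{0,\dots,l}$ of $S_j$ generates the full lattice (its successive differences include $1$, using $l\ge1$), the kernel $\phi$ has a spectral gap off the origin: there is $\rho<1$ with $\abs{\phi(\theta)}\le\rho$ for $\delta\le\abs{\theta}\le\pi$.

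The core estimate is then a routine Laplace/saddle-point analysis of the inversion integral at $n=\lfloor kl/2\rfloor$. I would split the integral at $\abs{\theta}=\delta$. On the outer range the integrand is bounded by $\rho^k$, so that contribution is $O(\rho^k)$ and is negligible against the $(l+1)^k k^{-1/2}$ scale. On the inner range I substitute $\theta=u/\sqrt{k}$; the expansion above gives $\phi(u/\sqrt k)^k e^{-i(kl/2)u/\sqrt k}\to e^{-\sigma_l^2 u^2/2}$ pointwise, while the residual phase $e^{i(kl/2-n)u/\sqrt k}$ tends to $1$ because $kl/2-\lfloor kl/2\rfloor\in\set{0,\tfrac12}$ is bounded. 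Dominated convergence, with a Gaussian majorant furnished by a uniform bound $\abs{\phi(\theta)}\le e^{-c\theta^2}$ on $\abs{\theta}\le\delta$, then gives
\begin{align*}
  \sqrt{k}\,P\!\left[S_l^{(k)}=\left\lfloor\tfrac{kl}{2}\right\rfloor\right]\longrightarrow\frac{1}{2\pi}\int_{-\infty}^{\infty}e^{-\sigma_l^2 u^2/2}\,du=\frac{1}{\sqrt{2\pi\sigma_l^2}} .
\end{align*}
Multiplying by $(l+1)^k$ yields the claimed equivalence.

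The main obstacle is purely the rigorous justification of this local limit theorem rather than any new idea: one must (i) secure the uniform gap $\abs{\phi}\le\rho<1$ off a neighborhood of $0$ to discard the tail, and (ii) produce an integrable Gaussian majorant on the central window so that dominated convergence applies uniformly in $k$. The bounded half-integer discrepancy $kl/2-\lfloor kl/2\rfloor$ affects only a vanishing phase and causes no trouble. Alternatively, one could simply invoke Gnedenko's local limit theorem for lattice distributions, since fixing $l$ while $k\to\infty$ makes $S_l^{(k)}$ a genuine i.i.d.\ sum; the argument above is just its self-contained specialization.
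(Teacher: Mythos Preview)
Your argument is correct: rewriting the central polynomial coefficient as $(l+1)^k P[S_l^{(k)}=\lfloor kl/2\rfloor]$ via Lemma~\ref{lemma:iid} and then establishing the local limit $\sqrt{k}\,P[S_l^{(k)}=\lfloor kl/2\rfloor]\to (2\pi\sigma_l^2)^{-1/2}$ by Fourier inversion (or by directly invoking Gnedenko's local limit theorem for span-$1$ lattice sums) is exactly the right mechanism, and all the technical points you flag---the spectral gap of $\phi$ away from $0$, the Gaussian majorant on the central window, the bounded half-integer shift---are handled correctly.

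As for comparison: the paper does not actually prove this lemma in-text; it simply cites an external manuscript \cite{eger3} (``Stirling's approximation for central polynomial coefficients''). So your proposal is strictly more informative than what appears here. Given the title of the cited work, the intended proof there is almost certainly the same local-CLT route you outline, so there is no genuine methodological divergence---you have essentially reconstructed the deferred argument.
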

\begin{proof}
  See Eger \cite{eger3}.
\end{proof}

\begin{lemma}\label{lemma:approx}
  For all %reasonably large 
  $l$ and $m$ and
  for all $n$ such that $0\le n\le ml$,
  \begin{align*}
    %P[S^{(m)}=n]\le P[X_{l,m}=n] \le P[S^{(m)}=n]+O(l^{-2}). 
    %P[S^{(m)}=n]\approxeq 
    P[X_{l,m}=n] = \gamma_{l,m}P[S^{(m)}_l=n]+e_{l,m}, %O(l^{-2}). 
  \end{align*}
  where $e_{l,m}$ is an ``error term'' that satisfies
  \begin{align*}
    0\le e_{l,m} \le O(l^{-2}) 
  \end{align*}
  and $\gamma_{l,m}$ satisfies
  \begin{align*}
    \gamma_{l,m}=\bigl(1+O(\inv{l})\bigr)^{-1}.
  \end{align*}
\end{lemma}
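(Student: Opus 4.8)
The plan is to take the exact distribution from Lemma~\ref{lemma:exact} and isolate the single summand $j=m$, which reproduces, up to a scalar, the distribution $P[S^{(m)}_l=n]$ of Lemma~\ref{lemma:iid}; everything left over becomes $e_{l,m}$. First I would observe that $0\le n\le ml$ forces the index $i$ of Lemma~\ref{lemma:exact} to satisfy $i\le m$, so the term $j=m$ is always present. Splitting it off,
\begin{align*}
  P[X_{l,m}=n]=\frac{1}{(l+1)^m-1}\frac{l}{l+1}\binom{m}{n}_{l+1}
    +\frac{1}{(l+1)^m-1}\frac{l}{l+1}\sum_{j=i}^{m-1}\binom{j}{n}_{l+1}.
\end{align*}
Since $P[S^{(m)}_l=n]=(l+1)^{-m}\binom{m}{n}_{l+1}$ by Lemma~\ref{lemma:iid}, the first term is $\gamma_{l,m}P[S^{(m)}_l=n]$ with $\gamma_{l,m}=\frac{(l+1)^m}{(l+1)^m-1}\cdot\frac{l}{l+1}$, and the second term is the candidate error $e_{l,m}$, manifestly $\ge 0$ because polynomial coefficients are non-negative.

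Next I would pin down $\gamma_{l,m}$. Writing $\gamma_{l,m}^{-1}=\bigl(1-(l+1)^{-m}\bigr)\bigl(1+\inv{l}\bigr)$ and expanding, the three correction terms $\inv{l}$, $(l+1)^{-m}\le\frac{1}{l+1}$, and $\inv{l}(l+1)^{-m}$ are each $O(\inv{l})$, so $\gamma_{l,m}^{-1}=1+O(\inv{l})$, i.e. $\gamma_{l,m}=\bigl(1+O(\inv{l})\bigr)^{-1}$, exactly as claimed.

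The heart of the lemma is the upper bound on $e_{l,m}$. I would bound each summand by the central coefficient via Lemma~\ref{lemma:central}, extend the range to $j=1,\dots,m-1$ (harmless, as all terms are non-negative), and then replace the central coefficient using Lemma~\ref{lemma:asymptotic}. Concretely, I would use a bound of the form $\binom{j}{\lfloor jl/2\rfloor}_{l+1}\le C\,(l+1)^j/\sqrt{\sigma^2}$, with $\sigma^2=\frac{(l+1)^2-1}{12}$ and $C$ absorbing the factors $j^{-1/2}\le 1$ and $\sqrt{2\pi}$. Summing the geometric series $\sum_{j=1}^{m-1}(l+1)^j<(l+1)^m/l$ then gives
\begin{align*}
  e_{l,m}\le \frac{l}{l+1}\frac{1}{(l+1)^m-1}\frac{C}{\sqrt{\sigma^2}}\frac{(l+1)^m}{l}
    =\frac{C}{\sqrt{\sigma^2}}\frac{1}{l+1}\frac{(l+1)^m}{(l+1)^m-1}.
\end{align*}
As $\sqrt{\sigma^2}\sim(l+1)/\sqrt{12}$, the right-hand side is of order $l^{-2}$, so $0\le e_{l,m}\le O(l^{-2})$.

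I expect the genuine obstacle to be the rigorous, \emph{uniform} use of Lemma~\ref{lemma:asymptotic}: that lemma is an asymptotic in $j$ for fixed $l$, whereas here $l$ grows simultaneously and the bound must hold for all large $l$ and $m$ at once. Making the step airtight requires upgrading the equivalence $\sim$ to a two-sided inequality $\binom{j}{\lfloor jl/2\rfloor}_{l+1}\le C\,(l+1)^j/\sqrt{\sigma^2}$ whose constant $C$ is independent of both $j$ and $l$; the factor $\sigma^{-1}\sim\sqrt{12}/(l+1)$ is precisely what supplies the second power of $\inv{l}$, so it is the control of the $l$-dependence, not the summation, that is delicate. (Retaining the discarded weight $j^{-1/2}$ would in fact sharpen this to $O(l^{-2}m^{-1/2})$, but the stated $O(l^{-2})$ is all the sequel needs.)
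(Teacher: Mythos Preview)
Your proposal is correct and follows essentially the same route as the paper: split off the $j=m$ summand from Lemma~\ref{lemma:exact} to produce $\gamma_{l,m}P[S^{(m)}_l=n]$, define the remainder as $e_{l,m}$, and bound it via Lemmas~\ref{lemma:central} and~\ref{lemma:asymptotic} followed by a geometric sum. Your flagged concern about the uniformity in $l$ of Lemma~\ref{lemma:asymptotic} is legitimate---the paper handles this step the same way you do, absorbing the passage from $\sim$ to $\le$ into an unspecified $O(1)$ constant without further justification.
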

\begin{proof}
  %By \eqref{eq:compound}, $P[X_{l,m}=n]\ge P[S^{(m)}=n]\Bigl(\sum_{k=1}^m
  %P[N=k] \Bigr) = P[S^{(m)}=n]$ since $P[S^{(r)}=n]\ge P[S^{(s)}=n]$ for
  %$r\ge s$. 
  
  Let $i$, $1\le i\le m$, be the smallest index such that $n\le
  il$. %We then have, defining
  Moreover, define $\alpha_{l,m}$ as
  $\alpha_{l,m}=\frac{1}{(l+1)^m-1}\frac{l}{l+1}$ and note that
  %$\alpha_{l,m}\approxeq \frac{1}{(l+1)^m}$ for all reasonably large
  $\alpha_{l,m}=\gamma_{l,m}\frac{1}{(l+1)^m}$, where
  $\gamma_{l,m}=\inv{(1+1/l)}$ (ignoring the $(-1)$ in the
  denominator of $\alpha_{l,m}$). Then
  %$l$ and $m$ and we in fact take the liberty to treat both
  %quantities as equal when it fits our needs.  
  \begin{align*}
    P[X_{l,m}=n] &= \alpha_{l,m}\sum_{j=i}^m
    \binom{j}{n}_{l+1} =
    \alpha_{l,m}\binom{m}{n}_{l+1}+\alpha_{l,m}\sum_{j=i}^{m-1}\binom{j}{n}_{l+1}
    %\approxeq 
    = 
    \gamma_{l,m}P[S^{(m)}_l=n]+e_{l,m}, %\alpha_{l,m}\sum_{j=i}^{m-1}\binom{j}{n}_{l+1}.
    %\\
  \end{align*}
  where we define
  $e_{l,m}=\alpha_{l,m}\sum_{j=i}^{m-1}\binom{j}{n}_{l+1}$. Obviously,
  $e_{l,m}\ge 0$. Moreover, by Lemmas \ref{lemma:central} and
  \ref{lemma:asymptotic} 
  \begin{align}\label{eq:start}
    e_{l,m}&\le
    %P[S^{(m)}=n]+
    \alpha_{l,m}\sum_{j=i}^{m-1}\binom{j}{\lfloor\frac{jl}{2}\rfloor}_{l+1}
    \le
    %P[S^{(m)}=n]+
    \alpha_{l,m}O(1)\sum_{j=i}^{m-1}\frac{(l+1)^j}{\sqrt{2\pi
        j\frac{(l+1)^2-1}{12}}}. 
  \end{align}
  Now,
  \begin{align*}
    \frac{(l+1)^j}{\sqrt{2\pi j\frac{(l+1)^2-1}{12}}} =
    O(1)\cdot\frac{(l+1)^{j}}{\sqrt{j\bigl((l+1)^2-1\bigr)}},
  \end{align*}
  %And thus, %, for large $m$,
  so that
  \begin{align*}
    \sum_{j=i}^{m-1}\frac{(l+1)^j}{\sqrt{2\pi
        j\frac{(l+1)^2-1}{12}}} =
    \sum_{j=i}^{m-1}O(1)\frac{(l+1)^{j}}{\sqrt{j}\sqrt{(l+1)^2-1}} \le
    O(1)\sum_{j=i}^{m-1}(l+1)^{j-1}
    = O(1)\frac{(l+1)^{i-1}\bigl[(l+1)^{m-i}-1\bigr]}{l},
  \end{align*}
  %and thus
  whence, continuing from \eqref{eq:start},
  \begin{equation}\label{eq:shape}
    \begin{split}
    e_{l,m}&\le \alpha_{l,m}O(1)\sum_{j=i}^{m-1}\frac{(l+1)^j}{\sqrt{2\pi
        j\frac{(l+1)^2-1}{12}}} \le  
    O(1)\frac{(l+1)^{i-2}}{(l+1)^m-1}\bigl[(l+1)^{m-i}-1\bigr]
    %\approxeq
    \\&\le
    O(1)\Bigl((l+1)^{-2}-(l+1)^{i-m-2}\Bigr)
    \le O(1)(l+1)^{-2}.
    \end{split}
  \end{equation}  
\end{proof}

% Not important, is it?
%Also note that, by Lemma \ref{lemma:asymptotic}, $P[S^{(m)}=n]$ goes
%approximately linearly toward zero as $l\goesto\infty$, whereas in
%Lemma \ref{lemma:approx}, the error term $O(l^{-2})$ goes quadratically
%toward zero. 
In Table \ref{table:approx}, we show the decrease of $e_{l,m}$ in Lemma
\ref{lemma:approx} as $l$ increases. Obviously, our bound is apparently
quite well, as in fact $e_{l,m}$ seems to 
%approach quadratic decay in $l$. 
approximately quadratically decay in $l$. 
In Figure \ref{fig:xlm_slm}, the distributions of
$X_{l,m}$ and $S_{l}^{(m)}$ for different values of $l$ and $m$ are
plotted. The variable $X_{l,m}$ has a particular distributional shape
that can be 
inferred from the proof of Lemma \ref{lemma:approx}. For small
values $n$ the distribution of $X_{l,m}$ tends to be larger than that
of $S_l^{(m)}$ --- $e_{l,m}$ is relatively larger as can be seen from
Equation \eqref{eq:shape} 
--- while this relation is 
reversed for large $n$.

\begin{table}[!h]
  \centering
  \begin{tabular}{l||rr|rr}
    & \multicolumn{2}{|c|}{$m=10$}  & \multicolumn{2}{|c}{$m=20$} \\\hline\hline
    $l=1$ & $0.0471$ & & $0.0240$ &\\
    $l=2$ & $0.0191$ & $2.46$ & $0.0093$ & $2.57$\\
    $l=4$ & $0.0064$ & $2.94$ & $0.0031$ & $2.96$\\
    $l=8$ & $0.0019$ & $3.25$ & $9.5016\times 10^{-4}$& $3.30$\\
    $l=16$ & $5.5909\times 10^{-4}$ & $3.56$ &  $2.6494\times 10^{-4}$& $3.58$\\
    $l=32$ & $1.4871\times 10^{-4}$ & $3.75$ & $7.0291\times 10^{-5}$& $3.76$\\
    $l=64$ & $3.8399\times 10^{-5}$ & $3.82$ & $1.8126\times 10^{-5}$& $3.87$\\
  \end{tabular}\hspace{0.5cm}
  %\begin{tabular}{l||r|r}
  %  & $m=10$ & $m=20$ \\\hline
  %  $l=1$ & $0.001956$ & $1.9074e-006$\\
  %  $l=2$ & $0.049330$ & $0.024836$\\
  %  $l=4$ & $0.048918$ & $0.024740$\\
  %  $l=8$ & $0.034165$ & $0.017302$\\
  %  $l=16$ & $0.020322$ & $0.010298$\\
  %  $l=32$ & $0.011114$ & $0.005634$\\
  %  $l=64$ & $0.005817$ & $0.002949$\\
  %\end{tabular}\hspace{0.5cm}
  \caption{
    Maximum over absolute differences
    $\abs{P[X_{l,m}=n]-P[S^{(m)}_l=n]}$, $n=0,\dotsc,lm$, for $m=10$
    and $m=20$ and varying $l$. We also specify the factor of decrease in
    these differences
    between successive $l$ values.
    %errors, and relative error for central
    %polynomial coefficient.
  }
  \label{table:approx}
\end{table}
\begin{figure*}[!ht]
    \centering
    \includegraphics[scale=0.12]{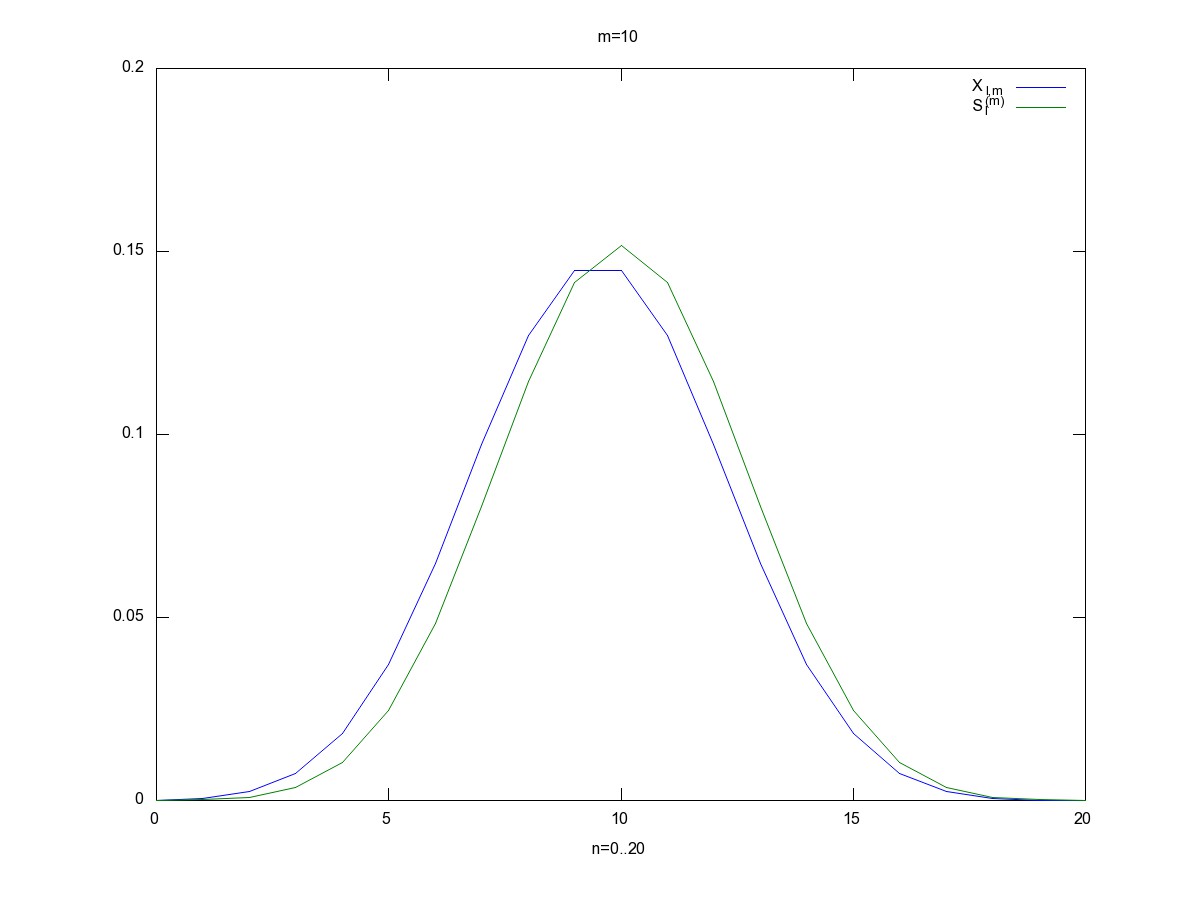}
    \includegraphics[scale=0.12]{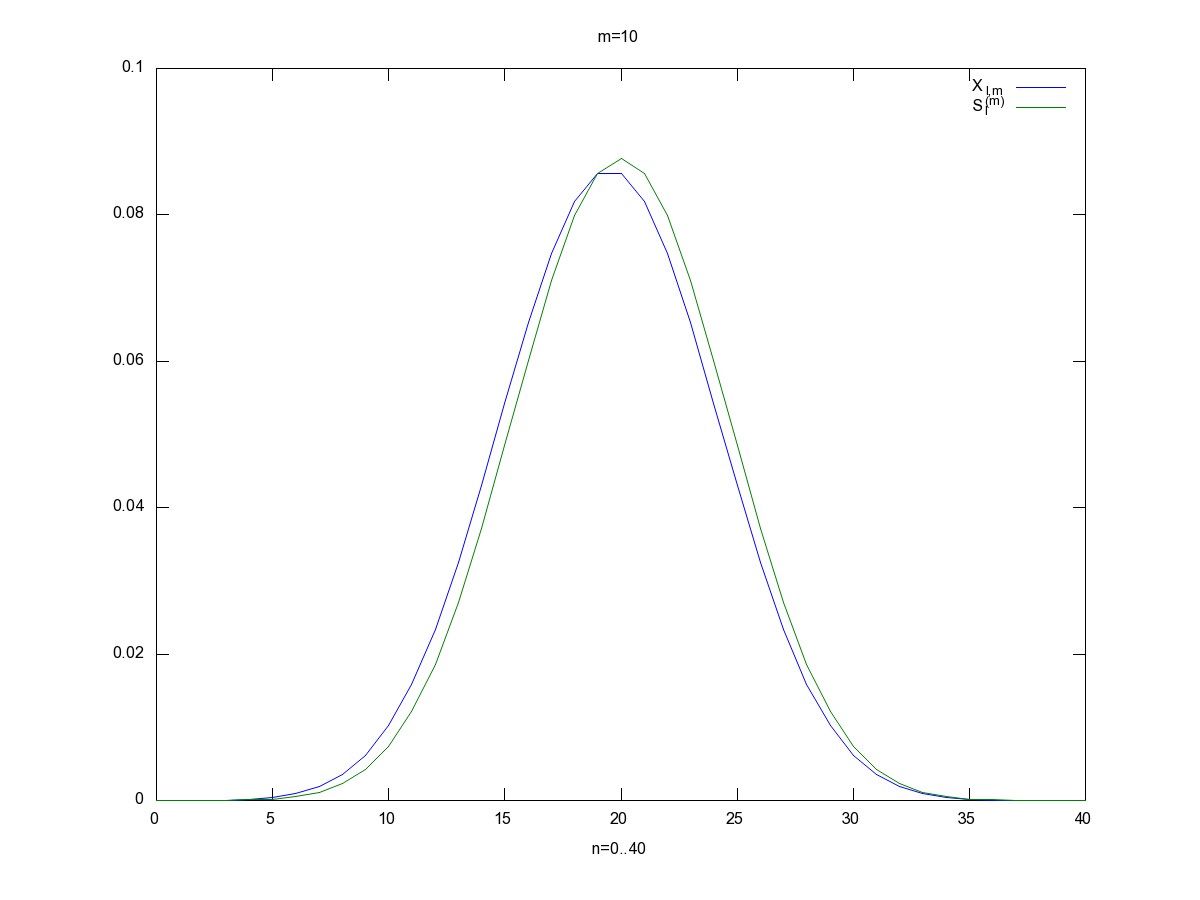}
    \includegraphics[scale=0.12]{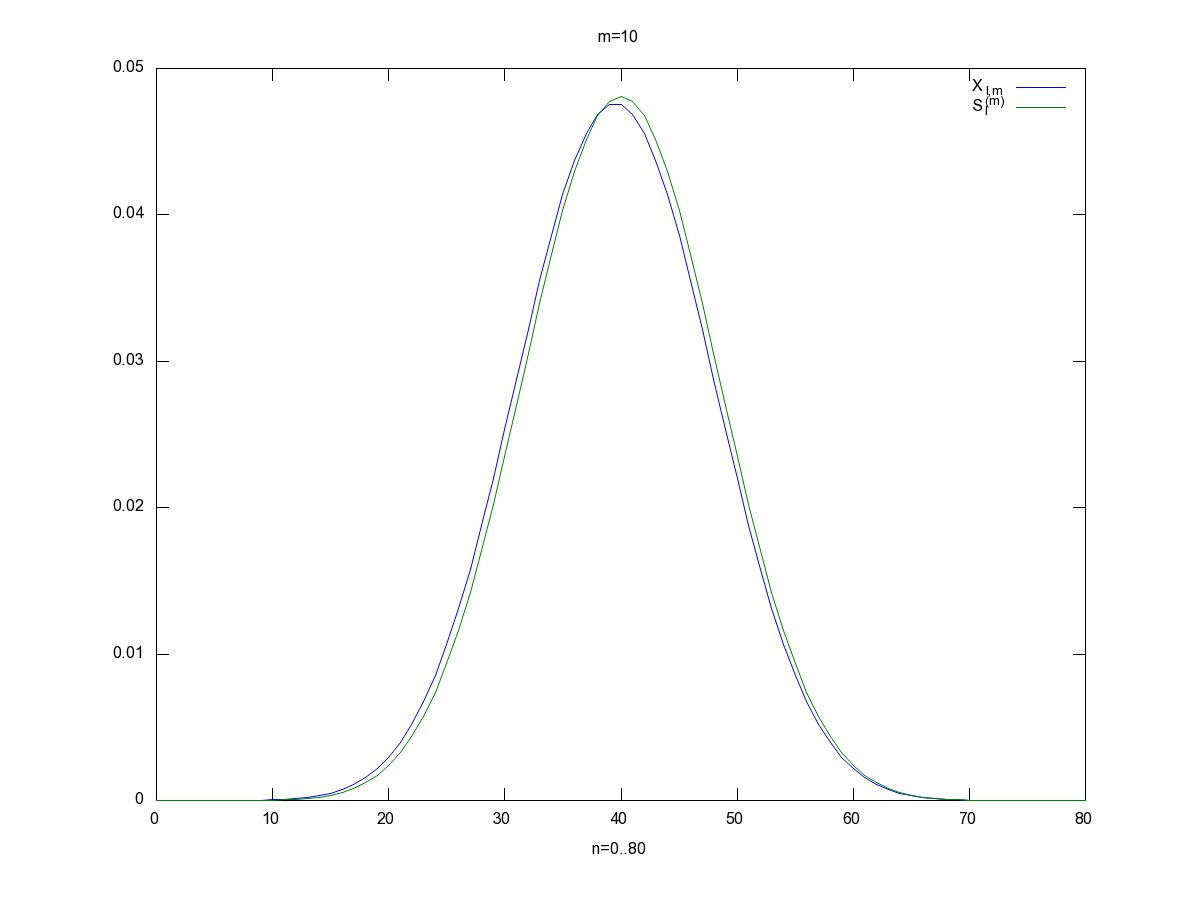}
    \caption
{ 
The distributions of $X_{l,m}$ and $S^{(m)}_l$ for $m=10$ and $l=2$
(left), $l=4$ (middle), and $l=8$ (right). 
}
\label{fig:xlm_slm}
  \end{figure*}

\section{Conclusion}
%General $\set{a,a+1,\dotsc,b}$. 
The choice of the restrictions $0\le p\le l$ for parts $p$ of integer
compositions has, although illustrating a model case, largely been
arbitrary. In fact, similar results as Theorem \ref{theorem:main}
would hold for any finite set 
$L=\set{a_1,\dotsc,a_k}$ as range for part sizes. For
$L=\set{a,a+1,\dotsc,b}$, $0\le a\le b$, we find simple closed form
solutions of the asymptotic distribution of $X_{L,m}$, where we define
$X_{L,m}$ (and other variables such as $S_{L}^{(m)}$) as a
generalization of $X_{l,m}$ above with 
$X_{l,m}=X_{\set{0,\dotsc,l},m}$. For example, in this case,
$S^{(m)}_L$ has exact distribution 
\begin{align*}
  \Bigl(\frac{1}{b-a+1}\Bigr)^m\binom{m}{n-ma}_{b-a+1},
\end{align*}
(cf. Eger (2012) \cite{eger2}) with expected value $\frac{m(a+b)}{2}$
and is, by the Central Limit Theorem, asymptotically normally
distributed. Conversely, the distribution of $X_{L,m}$ allows a
similar representation as in Lemma \ref{lemma:exact}, as a sum of
quantities $\binom{j}{n-ja}_{b-a+1}$ and a normalizing term, from
which we can straightforwardly derive a decomposition of $X_{L,m}$ as
in Lemma \ref{lemma:approx}, with bounds obtained from Lemmas
\ref{lemma:central} and \ref{lemma:asymptotic}.
%Start from 1 --- change indices to 1
%Modify introduction.

As a final remark, note that our results entail a `Stirling' like
formula for $h_{l,m}(n)$. By definition
$P[X_{l,m}=n]=\frac{h_{l,m}(n)}{\sum_{i=0}^{lm} h_{l,m}(i)}$, and
equating this quantity at its asymptotic mean value $\frac{ml}{2}$
with the corresponding normal density leads to
\begin{align*}
  h_{l,m}(\frac{ml}{2}) \sim
  \frac{\bigl((l+1)^m-1\bigr)\frac{l+1}{l}}{\sqrt{2\pi m
      \frac{(l+1)^2-1}{12}}}. 
\end{align*} 

\end{document}